\newcommand{\sdot}{{\mbox{\normalsize$\cdot$}}} 
\newcommand{\rank}{\mathop{\rm rank}}
\newcommand{\pperp}{\mathrel{\mbox{$\perp\hspace{-0.6em}\perp$}}}
\DeclareMathOperator{\inn}{in}
\newcommand{\mc}{\mathcal}
\newcommand{\mb}{\mathbf}
\newcommand{\bb}{\mathbb}
\DeclareMathOperator{\Edges}{Edges}
\newtheorem{theorem}{Theorem}
\newtheorem{proposition}[theorem]{Proposition}
\begin{document}
\title[The binomial ideal of the intersection axiom]
{The binomial ideal of the intersection axiom\\ for conditional probabilities}
\author[Alex Fink]{Alex Fink$^1$}
\thanks{$^1$ Department of Mathematics, University of California, Berkeley,
{\tt finka\char64math.berkeley.edu}}
\maketitle


\begin{abstract}
The binomial ideal associated with the intersection axiom of  
conditional probability is shown
to be radical and is expressed as an intersection of toric prime ideals.  
This solves a problem
in algebraic statistics posed by Cartwright and Engstr\"om.
\end{abstract}

Conditional independence contraints are a family of
natural constraints on probability distributions,
describing situations in which two random variables 
are independently distributed given knowledge of a third.  
Statistical models built around considerations of conditional independence, 
in particular {\em graphical models} in which the constraints
are encoded in a graph on the random variables, enjoy wide
applicability in determining relationships among 
random variables in statistics and in
dealing with uncertainty in artificial intelligence.

One can take a purely combinatorial perspective on the study of
conditional independence, as does Studen\'y~\cite{Stud},
conceiving of it as a relation on triples of subsets of a set of observables 
which must satisfy certain axioms.  A number of elementary implications 
among conditional independence statements are recognised as axioms.  
Among these are the {\em semi-graphoid axioms},
which are implications of conditional independence statements 
lacking further hypotheses, and hence are purely combinatorial statements.  
The {\em intersection axiom} is also often added to the collection, 
but unlike the semi-graphoid axioms it is not uniformly true;
it is our subject here.

Formally, a conditional independence model $\mc M$ 
is a set of probability distributions
characterised by satisfying several conditional independence constraints.
We will work in the discrete setting, 
where a probability distribution $p$ is a multi-way table of probabilities,
and we follow the notational conventions in~\cite{LoAS}.  

Consider the discrete conditional independence model $\mc M$ given by
$$\{X_1\pperp X_2\mid X_3, X_1\pperp X_3\mid X_2\}$$
where $X_i$ is a random variable taking values in the set $[r_i]=\{1,\ldots,r_i\}$.  
Throughout we assume $r_1\geq2$.
Let $p_{ijk}$ be the unknown probability $P(X_1=i,X_2=j,X_3=k)$ in a distribution
from the model $\mc M$.
The set of distributions in the model~$\mc M$ is the variety
whose defining ideal $I_{\mc M}\subseteq S = \bb C[p_{ijk}]$ is 
\begin{align*}
I_{\mc M}&=(p_{ijk}p_{i'j'k}-p_{ij'k}p_{i'jk} : i,i'\in[r_1], j,j'\in[r_2], k\in[r_3])
\\&\quad{}+(p_{ijk}p_{i'jk'}-p_{ijk'}p_{i'jk} : i,i'\in[r_1], j\in[r_2], k,k'\in[r_3]).
\end{align*}
The intersection axiom is the axiom whose premises are the statements of~$\mc M$
and whose conclusion is~$X_1\pperp(X_2,X_3)$.  This implication
requires the further hypothesis that the distribution~$p$ is in the interior 
of the probability simplex, i.e.\ that no individual probability $p_{ijk}$ is zero.  
It is thus a natural question to ask what can be inferred about
distributions $p$ which may lie on the boundary of the probability simplex. 
In algebraic terms, we are asking for a primary decomposition of~$I_{\mc M}$. 

Our Proposition~\ref{th:set-th} resolves a problem posed by Dustin Cartwright 
and Alexander Engstr\"om in~\cite[p.~152]{LoAS}.
The problem concerned the primary decomposition of $I_{\mc M}$;
they conjectured a description in terms of subgraphs of a complete bipartite graph,
which we show here to be correct.

In the course of this project the author carried out computations of 
primary decompositions for the ideal~$\mc M_I$ for various values
of $r_1$, $r_2$, and~$r_3$ with the computer algebra system 
Singular~\cite{GPS01,GPSprimdec}.
Thomas Kahle has recently written dedicated Macaulay2 code~\cite{Macaulay2}
for binomial primary decompositions~\cite{KahleBPD}, 
in which the same computations may be carried out.

A broad generalisation of this paper's results to the class
of {\em binomial edge ideals} of graphs has been obtained 
by Herzog, Hibi, Hreinsd\'ottir, Kahle, and Rauh \cite{HHHKR}.


Let $K_{p,q}$ be the complete bipartite graph with bipartitioned vertex set
$[p]\amalg [q]$.  We say that a subgraph~$G$ of~$K_{r_2,r_3}$ 
is {\em admissible} if $G$ has vertex set
$[r_2]\amalg[r_3]$ and all connected components of~$G$ are isomorphic to some 
complete bipartite graph $K_{p,q}$ with $p, q \geq 1$.   

Given a subgraph $G$ with edge set $\Edges(G)$, the prime $P_G$ to which it corresponds 
is defined to be 
\begin{equation}\label{eq:P0+P1}
P_G = P^{(0)}_G + P^{(1)}_G
\end{equation}
where
\begin{align*}
P^{(0)}_G &= (p_{ijk} : i\in [r_1],(j,k)\not\in \Edges(G)),\\[2pt]
P^{(1)}_G &= (p_{ijk}p_{i'j'k'}-p_{ij'k'}p_{i'jk} : 
i,i'\in [r_1], \\&\qquad 
\mbox{$j,j'\in [r_2]$ and $k,k'\in [r_3]$ in the same connected component of~$G$}).
\end{align*}
Note that $j$ and~$j'$, and $k$ and~$k'$, need not be distinct.
That is, for $(p_{ijk})$ on the variety $V(P_G)$, $p_{ijk}=0$ for $(j,k)\not\in \Edges(G)$,
and any pair of vectors $p_{\sdot jk}$ and $p_{\sdot j'k'}$ are proportional for 
$(j,k)$ and~$(j',k')$ two edges in $\Edges(G)$ 
in the same connected component of~$G$.  
Later we will also want to refer to the individual summands 
$P^{(1)}_C$ of~$P^{(1)}_G$, where $P^{(1)}_C$ includes only the generators 
$\{p_{ijk}:(j,k)\in C\}$ arising from edges in the connected component~$C$.   

\begin{proposition}\label{th:set-th}\label{conj:Cart-Eng}
The set of minimal primes of the ideal $I_{\mc M}$ is 
$$\{P_G : \mbox{$G$ an admissible graph on~$[r_2]\amalg[r_3]$}\}.$$
\end{proposition}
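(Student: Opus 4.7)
The plan is to verify directly that each candidate $P_G$ is a prime containing $I_{\mc M}$, then to show conversely that every prime of $I_{\mc M}$ contains some $P_G$ with $G$ admissible, and finally that the $P_G$ are pairwise incomparable. The primality of $P_G$ is classical: $P_G^{(0)}$ is a monomial prime, and each summand $P_C^{(1)}$ is the $2\times 2$-minors ideal of the matrix whose columns are $p_{\sdot jk}$ for $(j,k)\in C$ (a Segre prime). These summands (the $P_G^{(0)}$ and the various $P_C^{(1)}$) involve pairwise disjoint sets of variables, so their sum $P_G$ remains prime. The containment $I_{\mc M}\subseteq P_G$ is a case check on generators: a binomial $p_{ijk}p_{i'j'k}-p_{ij'k}p_{i'jk}$ (shared $k$) either has some edge $(j,k)$ or $(j',k)$ outside $\Edges(G)$, in which case both monomials lie in $P_G^{(0)}$, or else both edges lie in $\Edges(G)$ and share the vertex $k$, so they are in the same component and the binomial is a generator of the corresponding $P_C^{(1)}$.

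For the hard direction, I show any prime $Q\supseteq I_{\mc M}$ contains some $P_G$ with $G$ admissible. Define a preliminary graph $G_0$ on $[r_2]\amalg[r_3]$ by declaring $(j,k)\in\Edges(G_0)$ iff the column vector $p_{\sdot jk}=(p_{ijk})_i$ is nonzero in the domain $S/Q$. The key observation is a \emph{transitivity of proportionality}: the generators of $I_{\mc M}$ say exactly that, when $p_{\sdot jk}$ and $p_{\sdot j'k}$ are both nonzero mod $Q$ (sharing $k$), the $r_1\times 2$ matrix with these columns has vanishing $2\times 2$ minors in $S/Q$, so the columns are proportional in the fraction field $K(Q)^{r_1}$; analogously if they share $j$. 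Chaining along any path in $G_0$, all nonzero columns in a single connected component of $G_0$ are therefore pairwise proportional.

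Next, let $G$ be the admissible graph obtained from $G_0$ by replacing each two-sided component $C$ with its complete bipartite closure $K_{B(C),D(C)}$, and assigning each isolated vertex of $G_0$ to one of these components (or, if no two-sided component exists, all $p_{ijk}$ already lie in $Q$, and any admissible graph on $[r_2]\amalg[r_3]$ will do). Then $P_G^{(0)}\subseteq Q$ is immediate from $\Edges(G_0)\subseteq\Edges(G)$, and each generator of $P_G^{(1)}$ either has an edge outside $\Edges(G_0)$ (so both of its monomials contain a factor in $Q$), or else both of its edges lie in $\Edges(G_0)$ and hence in the same component of $G_0$, so the transitivity observation puts the binomial in $Q$. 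Thus $P_G\subseteq Q$, forcing every minimal prime of $I_{\mc M}$ to be of the form $P_G$.

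It remains to see that no two of these $P_G$ are comparable: if $P_G\subseteq P_{G'}$, then inspecting which individual variables lie in $P_{G'}$ (its linear part comes only from $P_{G'}^{(0)}$) forces $\Edges(G')\subseteq\Edges(G)$, and a generic point of $V(P_{G'})$ carries independent generic rank-$1$ matrices on the distinct components of $G'$, which can only combine to satisfy the rank-$1$ conditions imposed by $P_G$ if each component of $G$ coincides with a component of $G'$, i.e.\ $G=G'$. The hardest part of the argument is the middle step: the construction of $G$ from $Q$ together with the transitivity-of-proportionality observation, with a little bookkeeping care for the isolated vertices of $G_0$.
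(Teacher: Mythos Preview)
Your argument is correct. The primality of $P_G$, the containment $I_{\mc M}\subseteq P_G$, and the incomparability of the $P_G$ are handled essentially as in the paper. Your middle step---passing from an arbitrary prime $Q\supseteq I_{\mc M}$ to an admissible $G$ with $P_G\subseteq Q$ by reading off the graph $G_0$ of nonvanishing columns in the domain $S/Q$, propagating proportionality along paths, and then completing $G_0$ to an admissible graph---is the natural direct argument, and it goes through cleanly. (One small point worth making explicit in the incomparability step: once you know $\Edges(G')\subseteq\Edges(G)$ and that every component of $G$ meets at most one component of $G'$, admissibility forces the component vertex sets to coincide, hence $G=G'$.)

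The paper, however, does not prove Proposition~\ref{th:set-th} this way. Instead it derives Proposition~\ref{th:set-th} as a corollary of the stronger ideal-theoretic Theorem~\ref{th:big}, which establishes the exact equality $I_{\mc M}=\bigcap_G P_G$ via a multigraded Hilbert-function comparison: after checking $I_{\mc M}\subseteq P_G$ and irredundance, the paper shows that $\inn_{\prec_{\rm dp}} I_{\mc M}$ and $\bigcap_G \inn_{\prec_{\rm dp}} P_G$ agree degree by degree (Claims~1 and~2), using the fiber-graph description of binomial ideals and the staircase-triangulation description of $\inn_{\prec_{\rm dp}}$ of a $2\times2$-minors ideal. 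Your route is more elementary and self-contained, needing no Gr\"obner or initial-ideal machinery; the paper's route is heavier but yields more, namely radicality of $I_{\mc M}$ and the compatibility of the decomposition with passage to initial ideals.
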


In particular, the value of $r_1$ is irrelevant to the 
combinatorial nature of the primary decomposition.

Proposition~\ref{th:set-th} was the original conjecture of Cartwright and Engstr\"om.  
It is a purely set-theoretic assertion, and is equivalent to the fact that
\begin{equation}\label{eq:variety primdec}
V(I_{\mc M})=\bigcup_G V(P_G)
\end{equation}
as sets, where the union is over admissible graphs $G$.
The ideas of a proof of Proposition~\ref{th:set-th}
were anticipated in part~4 of the problem stated in~\cite[\S6.6]{LoAS}
which was framed for the prime corresponding to the subgraph $G$,
the case where the conclusion of the intersection axiom is valid;
they extend without great difficulty to the general case.  

We will prove a stronger ideal-theoretic result.  
Let $\prec_{\rm dp}$ be the revlex term order on~$S$
over the lexicographic variable order on subscripts, 
with earlier subscripts more significant:
thus under $\prec_{\rm dp}$, we have 
$p_{111}\prec_{\rm dp}p_{112}\prec_{\rm dp}p_{211}$.

\begin{theorem}\label{th:big}\label{prop:radicality}
The primary decomposition 
\begin{equation}\label{eq:ideal primdec}
I_{\mc M}=\bigcap_G P_G
\end{equation}
holds and is an irredundant decomposition, 
where the union is over admissible graphs $G$ on~$[r_2]\amalg[r_3]$.
We moreover have
$$\inn_{\prec_{\rm dp}} I_{\mc M}=
\inn_{\prec_{\rm dp}}\bigcap_G P_G=
\bigcap_{G}\inn_{\prec_{\rm dp}} P_G.$$
Furthermore, each primary component $\inn_{\prec_{\rm dp}} P_G$ is squarefree, 
so $\inn_{\prec_{\rm dp}} I_{\mc M}$ and hence $I_{\mc M}$ are radical ideals.  
\end{theorem}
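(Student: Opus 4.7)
The plan is to reduce all three claims of the theorem to primeness of each $P_G$ together with an explicit squarefree description of $\inn_{\prec_{\rm dp}} P_G$. First I would dispatch primeness of $P_G$. For each connected component $C = K_{p,q}$ of $G$ with bipartition $J_C \amalg K_C$, arrange the variables $\{p_{ijk} : i\in[r_1],\, (j,k) \in J_C \times K_C\}$ as the entries of a generic $r_1 \times (pq)$ matrix with row index $i$ and column index $(j,k)$. The generators of $P^{(1)}_C$ are then exactly the $2\times 2$ minors of this matrix, cutting out the prime Segre variety of rank-one matrices. Different components use disjoint variable blocks, so $P^{(1)}_G = \sum_C P^{(1)}_C$ remains prime, and $P^{(0)}_G$ adjoins only further disjoint variables, so $P_G$ is prime. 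The same viewpoint makes $\inn_{\prec_{\rm dp}} P_G$ squarefree: $\prec_{\rm dp}$ restricts to a diagonal-type order on each component's variable block, and the $2\times 2$ minors of a generic matrix are known to form a Gr\"obner basis under any diagonal order with squarefree antidiagonal leading terms; combined with the linear generators of $P^{(0)}_G$, this makes $\inn_{\prec_{\rm dp}} P_G$ a squarefree monomial ideal, and hence so is $\bigcap_G \inn_{\prec_{\rm dp}} P_G$.

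The central step is the double equality
$$\inn_{\prec_{\rm dp}} I_{\mc M} \;=\; \inn_{\prec_{\rm dp}} \textstyle\bigcap_G P_G \;=\; \textstyle\bigcap_G \inn_{\prec_{\rm dp}} P_G.$$
Both inclusions $\inn_{\prec_{\rm dp}} I_{\mc M} \subseteq \inn_{\prec_{\rm dp}} \bigcap_G P_G \subseteq \bigcap_G \inn_{\prec_{\rm dp}} P_G$ follow formally once one verifies $I_{\mc M} \subseteq P_G$ for every admissible $G$, which is a short case check on the binomial generators of $I_{\mc M}$: if all four relevant indices lie in $\Edges(G)$, then the two edges share a vertex and hence lie in the same component, making the binomial a generator of $P^{(1)}_G$; otherwise some variable lies in $P^{(0)}_G$ and absorbs the binomial. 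The reverse containment $\bigcap_G \inn_{\prec_{\rm dp}} P_G \subseteq \inn_{\prec_{\rm dp}} I_{\mc M}$ is the main obstacle. My approach is, given a monomial $m$ in the intersection, to analyse the multiset of index pairs $(j,k)$ appearing among the variables of $m$ as a combinatorial structure on $[r_2] \amalg [r_3]$ and, by choosing admissible $G$ adversarially to constrain this structure, to construct an explicit element of $I_{\mc M}$ — built from the binomial generators together with suitable S-polynomial combinations — whose leading term divides $m$. This combinatorial lifting is the technical heart of the argument; as a cross-check, one could instead compare Hilbert series of $S/I_{\mc M}$ and $S/\bigcap_G \inn_{\prec_{\rm dp}} P_G$, reducing the comparison to a Mayer--Vietoris calculation on admissible graphs.

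Once the initial ideal equality is proved, $\inn_{\prec_{\rm dp}} I_{\mc M}$ is squarefree, whence $I_{\mc M}$ is radical by the standard descent of radicality from the initial ideal. Combined with Proposition~\ref{th:set-th}, radicality then yields $I_{\mc M} = \sqrt{I_{\mc M}} = \bigcap_G P_G$, which is irredundant because distinct $P_G$ are distinct minimal primes. All assertions of Theorem~\ref{th:big} follow.
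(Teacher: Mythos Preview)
Your peripheral arguments (primeness of $P_G$ via determinantal ideals on disjoint variable blocks, squarefreeness of $\inn_{\prec_{\rm dp}} P_G$ via the diagonal Gr\"obner basis for $2\times 2$ minors, and the easy chain of inclusions $\inn_\prec I_{\mc M}\subseteq\inn_\prec\bigcap_G P_G\subseteq\bigcap_G\inn_\prec P_G$) agree with the paper essentially verbatim. The divergence is in the hard direction $\bigcap_G\inn_{\prec_{\rm dp}} P_G\subseteq\inn_{\prec_{\rm dp}} I_{\mc M}$. You propose to attack this by a direct combinatorial lifting: for each monomial $m$ in the intersection, manufacture an element of $I_{\mc M}$ with leading term dividing $m$. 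That is in principle a valid strategy, but you leave it as a promissory note, and carrying it out amounts to exhibiting a Gr\"obner basis of $I_{\mc M}$ --- this is exactly the nontrivial content later isolated in the binomial edge ideal literature you would be reinventing.

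The paper sidesteps this entirely with a multigraded Hilbert function argument, closer in spirit to the ``cross-check'' you mention only in passing. It equips $S$ with the grading by minimal sufficient statistics $\phi_I:\bb Z^{r_1r_2r_3}\to\bb Z^{r_1+r_2r_3}$, and for each multidegree $d$ introduces the single bipartite graph $G(d)$ on $[r_2]\amalg[r_3]$ with edge set $\{(j,k):d_{jk}\neq 0\}$. Two short claims then do all the work: (1) $I_d=(P_{G(d)})_d$, proved by showing that any move for $P^{(1)}_{G(d)}$ in this fiber can be realised as a path of moves for $I$ along edges of $G(d)$; and (2) $(\bigcap_G\inn_\prec P_G)_d=(\inn_\prec P_{G(d)})_d$, proved by the staircase description of $\inn_\prec P_K$ together with a case analysis on whether $G(d)\subseteq G$. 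These give $H(S/\inn_\prec I_{\mc M})=H(S/\bigcap_G\inn_\prec P_G)$, which upgrades the inclusion chain to equalities. Note in particular that once this is done, $I_{\mc M}=\bigcap_G P_G$ follows immediately from equality of Hilbert functions plus containment, so your closing appeal to Proposition~\ref{th:set-th} is not needed; the paper instead proves irredundance directly by exhibiting, for distinct admissible $G,G'$, a point of $V(P_G)$ outside $V(P_{G'})$.
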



It is noted in~\cite[\S6.6]{LoAS} that the number $\eta(p,q)$ of 
admissible graphs~$G$ on $[p]\amalg[q]$ is given by the generating function
\begin{equation}\label{eq:admissible gf}
\exp((e^x-1)(e^y-1)) = \sum_{p,q\geq0} \eta(p,q)\frac{x^py^q}{p!q!}.
\end{equation}
which in that reference is said to follow from manipulations of Stirling numbers.
This equation \eqref{eq:admissible gf} can also be obtained
as a direct consequence of a bivariate form of the exponential
formula for exponential generating functions~\cite[\S5.1]{Stanv2},
using the observation that 
$$(e^x-1)(e^y-1) = \sum_{p,q\geq1} \frac{x^py^q}{p!q!}$$ 
is the exponential generating function for complete bipartite graphs with $p,q\geq1$, 
and these are the possible connected components of admissible graphs.

We now review some standard facts on binomial and toric ideals~\cite{ES}.
Let $I$ be a binomial ideal in $\bb C[x_1,\ldots,x_n]$, generated
by binomials of the form $x^v-x^w$ with $v,w\in\bb N^n$.
There is a lattice $L_I\subseteq Z^n$ such that the localisation
$I_{x_1\cdots x_n}\subseteq\bb C[x_1^{\pm1},\ldots,x_n^{\pm1}]$
has the form $(x^v-1 : v\in L_I)$,
provided that this localisation is a proper ideal, 
i.e.\ $I$ contains no monomial.
If $\phi_I:\bb Z^n\to\bb Z^m$ is a
$\bb Z$\/-linear map whose kernel is $L_I$,  
then $\phi_I$ provides a multigrading with respect to which $I$ is homogeneous.
In statistical terms $\phi_I$ computes the {\em minimal sufficient statistics} 
for the statistical model associated to~$I$.

Given a multivariate Laurent polynomial $f\in\bb C[x_1^{\pm1},\ldots,x_n^{\pm1}]$, 
$f$ lies in~$I_{x_1\cdots x_n}$ if and only if, for each fiber
$F$ of~$\phi_I$, the sum of the coefficients
on all monomials $x^v$ with $v\in F$ is zero.  
With respect to $\bb C[x_1,\ldots,x_n]$
a modified statement holds, as follows.  For each fiber $F$,
consider the graph $\Gamma_F(I)$ whose vertices are the set of vectors in~$F$ 
with all entries nonnegative, and whose edge set is $\{(v,w) : x^v-x^w$ 
is a monomial multiple of a generator of~$I\}$.
In the statistical context these edges are known as {\em moves}.
Then $f$ lies in~$I\subseteq\bb C[x_1,\ldots,x_n]$ if and only
if, for each connected component $C$ of each $\Gamma_F(I)$, the sum of the coefficients
on all monomials $x^v$ with $v\in C$ is zero.
In particular $I$ is determined by this set of connected components.



Viewing $I\subseteq\bb C[x_1^{\pm1},\ldots,x_n^{\pm1}]$ as the
ideal of the toric subvariety of~$(\bb C^\ast)^n$ associated to the lattice polytope~$A$, 
Sturmfels in~\cite{GBTV} shows that the radicals of the 
monomial initial ideals of~$I$ are exactly the Stanley-Reisner ideals 
of regular triangulations of~$A$.  
The {\em Stanley-Reisner ideal} $I_\Delta$ 
of a simplicial complex $\Delta$ on a set~$T$
is the monomial ideal of~$\bb C[x_t : t\in T]$ generated as a vector space by the 
products of variables $x_{t_1}\cdots x_{t_k}$ for which $\{t_1,\ldots,t_k\}$
does not contain a face of~$\Delta$.  Every squarefree monomial ideal is the
Stanley-Reisner ideal of some simplicial complex, and
primary decompositions of Stanley-Reisner
ideals are easily described: $I_\Delta$ is the intersection of the
ideals $(x_t : t\not\in F)$ over all facets $F$ of~$\Delta$. 

Sturmfels also treats explicitly
the ideal $I$ of $2\times 2$ minors of an $r\times s$ matrix $Y=(y_{ij})$, 
of which $P_K := P_{K_{r_2r_3}}$ is a particular case.  In this
case the polytope~$A$ is the product of two simplices, 
$\Delta_{r-1}\times\Delta_{s-1}$.  

\begin{theorem}[\cite{GBTV}]\label{thm:in(C)}
Let $I$ be the ideal of $2\times 2$ minors of an $r\times s$ matrix of indeterminates.
For any term order $\prec$, $\inn_\prec I$ is a squarefree monomial ideal. 
\end{theorem}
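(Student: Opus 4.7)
The plan is to deduce squarefreeness of $\inn_\prec I$ from the fact that the $2\times 2$ minors themselves form a universal Gr\"obner basis of $I$. First, observe that every $2\times 2$ minor has the shape $y_{ij}y_{kl} - y_{il}y_{kj}$ with $i\neq k$ and $j\neq l$, so each of its two monomials is a product of two distinct variables; whichever one the term order $\prec$ selects as leading term is therefore squarefree.

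It then suffices to verify that these minors already constitute a Gr\"obner basis with respect to every $\prec$. The ideal $I$ is the toric ideal of the Segre variety, with associated polytope $A=\Delta_{r-1}\times\Delta_{s-1}$, so by the result from~\cite{GBTV} quoted just above, the radicals of the monomial initial ideals of $I$ are precisely the Stanley--Reisner ideals of the regular triangulations of~$A$. The product of two simplices has the classical property that \emph{every} regular triangulation is unimodular, i.e.\ every maximal simplex has normalised volume one. This can be checked via an explicit staircase construction or via the Cayley trick. Under unimodularity no multiplicities enter into $\inn_\prec I$, so $\inn_\prec I$ coincides with its own radical and is therefore squarefree.

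Equivalently, one can attack the Gr\"obner basis property directly by Buchberger's criterion, verifying that each S-polynomial of two $2\times 2$ minors has a reduction to zero that succeeds uniformly in $\prec$; combinatorially this amounts to the statement that the $2\times 2$ minors provide a connected graph of moves on the set of nonnegative integer $r\times s$ matrices with any prescribed row and column sums. The main obstacle is precisely this unimodularity/universal Markov step, which rests on the combinatorics of two-dimensional contingency tables rather than on any particular property of~$\prec$. Once it is in hand, each generator of $\inn_\prec I$ is the squarefree leading term of some $2\times 2$ minor, and squarefreeness of the full initial ideal follows at once.
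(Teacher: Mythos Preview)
The paper does not prove this theorem at all: it is quoted from Sturmfels~\cite{GBTV} as an external input, with no argument given.  So there is no ``paper's proof'' to compare against; what you have written is essentially a reconstruction of the argument behind the cited result.

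Your overall strategy is the right one and matches the standard proof: once one knows that the $2\times 2$ minors form a universal Gr\"obner basis, squarefreeness of $\inn_\prec I$ is immediate because each minor has only squarefree terms.  There is, however, a genuine gap in your first justification.  The statement you invoke from the paper only says that the \emph{radical} of each monomial initial ideal is the Stanley--Reisner ideal of a regular triangulation of $\Delta_{r-1}\times\Delta_{s-1}$.  Knowing that the triangulation happens to be unimodular does not by itself let you strip the radical; you need the companion fact (also in~\cite{GBTV}, and Corollary~8.9 in Sturmfels's \emph{Gr\"obner Bases and Convex Polytopes}) that $\inn_\prec I_A$ is squarefree \emph{if and only if} the associated regular triangulation is unimodular.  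That equivalence is proved by a degree count: the normalised volume of the polytope equals both the degree of $S/\inn_\prec I_A$ and, when the triangulation is unimodular, the number of facets and hence the degree of the Stanley--Reisner ring, forcing the containment $\sqrt{\inn_\prec I_A}\subseteq\inn_\prec I_A$ to be an equality.  Once that is supplied, total unimodularity of the vertex set of $\Delta_{r-1}\times\Delta_{s-1}$ (so that \emph{every} triangulation is unimodular) completes the argument as you indicate.

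Your second route, a direct Buchberger check that S-pairs of $2\times2$ minors reduce to zero uniformly in~$\prec$, is also valid and arguably more self-contained; it avoids the polytopal machinery entirely, at the cost of a small case analysis.
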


This immediately yields the radicality claim of Theorem~\ref{prop:radicality}:
the $\inn_\prec P_G$ are squarefree monomial ideals, so their associated primes
are generated by subsets of the variables $\{p_{ijk}\}$.  

We repeat from~\cite{GBTV} one especially describable example of
an initial ideal of this ideal $I$, namely $\inn_{\prec_{\rm dp}} I$,  
corresponding to the case that $\Delta$ is the so-called staircase triangulation.
Then the vertices of the simplices of $\Delta$ 
correspond to those sets
$\pi$ of entries of the matrix $Y$ which form (``staircase'') paths through~$Y$
starting at the upper-left corner, 
taking only steps right and down, and terminating at the lower left corner. 
Hence to each such $\pi$ corresponds one
primary component $Q_{G,\pi}$, generated by all $(r-1)(s-1)$
indeterminates not lying on~$\pi$.  
Note that staircase paths are maximal subsets of indeterminates
not including both $x_{ij'}$ and $x_{i'j}$ for any $i<i'$ and~$j<j'$.

This framework suffices to understand the primary decomposition
of $\inn_\prec P_G$ for an arbitrary admissible graph~$G$.
Let the connected components of~$G$ be $C_1, \ldots, C_l$,
so that, from~\eqref{eq:P0+P1},
$\inn_\prec P_G$ is the sum of the ideal $\inn_\prec P_G^{(0)}=P_G^{(0)}$ 
and the various ideals $\inn_\prec P^{(1)}_{C_i}$, 
and moreover these summands use disjoint sets of variables.
Suppose that
$\inn_\prec P^{(1)}_{C_i}=\bigcap_j Q_{C_i,j}$ are primary decompositions
of the~$\inn_\prec P^{(1)}_{C_i}$.  Then it follows that we have the primary decomposition
$$\inn_\prec P_G = \bigcap_{\mb j} 
\left(P_G^{(0)} + \sum_{i=1}^l \inn_\prec Q_{C_i,j_i}\right)$$
where $\mb j=(j_1,\ldots,j_l)$ ranges over the Cartesian product
of the index sets in $\bigcap_j Q_{C,j}$.

\begin{proof}[Proof of Theorem~\ref{prop:radicality}]
We begin by proving that the right side of~\eqref{eq:ideal primdec}
is an irredundant primary decomposition.
Let $G$ be an admissible graph.  
For each connected component $C\subseteq G$ and fixed $i$, 
$P^{(1)}_C$ are the determinantal ideal 
of $2\times 2$ minors of the matrix with $r_1$ rows and
columns indexed by~$\Edges(C)$, whose $i,(j,k)$ entry is~$p_{ijk}$. 
Being a determinantal ideal, $P^{(1)}_C$ is prime.
The ideal $P^{(0)}_G$ is also prime, as it is generated
by a collection of variables.
Now $P_G$ is the sum of the prime ideals $P^{(0)}_G$ and
$P^{(1)}_C$ for each $C$, and the generators of these primes involve
pairwise disjoint subsets of the unknowns $p_{ijk}$.  
It follows that $P_G$ itself is prime.  

Irredundance is the assertion that for $G$ and~$G'$ distinct admissible graphs, 
$P_G$ is not contained in~$P_{G'}$.  
As above, we will think of the 3-tensor $(p_{ijk})$ 
as a size $r_2\times r_3$ table whose entries are
vectors $(p_{\sdot jk})$ of length~$r_1$.
Then if $(p_{ijk})\in V(P_G)$,
all nonzero vectors in each subtable determined by a connected component of~$G$
are proportional, while vectors outside of any subtable must be the zero vector.
There is an open dense subset $U_G\subseteq V(P_G)$ such that
for $(p_{ijk})\in U_G$, no vector $p_{\sdot jk}$ associated to
a connected component of~$G$ is zero, and no two associated to
distinct components are dependent.

Now, $G$ may differ from $G'$ in two fashions.
If $G$ contains an edge $(j,k)$ that $G'$ doesn't,
the vector $(p_{\sdot jk})$ is zero on~$V(P_{G'})$ but
is nonzero on~$U_G$: hence $V(P_G)\not\subseteq V(P_{G'})$.
If not, $G\subseteq G'$, but two edges $(j,k),(j',k')$ in different components of~$G$ 
must be in the same component of~$G'$, in which case the vectors $(p_{\sdot jk})$
and $(p_{\sdot j'k'})$ are linearly dependent for~$(p_{ijk})\in V(P_{G'})$
but linearly independent on~$U_G$:
hence also $V(P_G)\not\subseteq V(P_{G'})$.
This proves irredundance.

Now we turn to proving~\eqref{eq:ideal primdec}.
Let $\prec$ be $\prec_{\rm dp}$.
Write $I=I_{\mc M}$.  
It is apparent that $I\subseteq P_G$ for each~$G$.
Indeed, given a generator $f$ of~$I$, without loss of generality
$f=p_{ijk}p_{i'j'k}-p_{ij'k}p_{i'jk}$, 
either both edges
$(j,k)$ and $(j',k)$ lie in~$\Edges(G)$, in which case $f$ is 
a generator of~$P^{(1)}_G$, or one of these edges is not in~$\Edges(G)$,
in which case $f\in P^{(0)}_G$.  Therefore the containments 
\begin{equation*}\label{eq:P decomp}
{\inn}_\prec I \subseteq
\mathop{{\inn}_\prec} \bigcap_G P_G \subseteq
\bigcap_G\mathop{{\inn}_\prec} P_G 
\end{equation*}
hold.
It now suffices to show an equality of Hilbert functions
\begin{equation}\label{eq:H}
H(S/\inn_\prec I) = H(S/\bigcap_G\inn_\prec P_G). 
\end{equation}

In the present case, 
the lattice $L_I$ associated to~$I$ is generated by all vectors of the forms
$e_{ijk}+e_{i'j'k}-e_{ij'k}-e_{i'jk}$ and
$e_{ijk}+e_{i'jk'}-e_{ijk'}-e_{i'jk}$.  
The map $\phi_I:\bb Z^{r_1r_2r_3}\to\bb Z^{r_1+r_2r_3}$ 
sending $(u_{ijk})$ to 
$$\left(\sum_{(j,k)}u_{1jk}, \ldots, \sum_{(j,k)}u_{r_1jk},
\sum_i u_{i11}, \ldots, \sum_i u_{ir_2r_3}\right)$$ 
has kernel $L_I$ and thus
induces the multigrading on~$S$ by minimal sufficient statistics,
with respect to which $I$ is homogeneous.
In fact the analogue of~\eqref{eq:H} using Hilbert functions in the multigrading $\phi$
is also true, and it is this we will prove.

Let $d\in\bb Z^{r_1+r_2r_3}$ be the multidegree of some monomial, and write its components
as $d_i$ for $i\in [r_1]$ and $d_{jk}$ for $j,k\in[r_2]\times[r_3]$.
Let $G(d)$ be the bipartite graph with 
vertex set $[r_2]\amalg[r_3]$ and 
edge set
$\{(j,k) : d_{jk}\neq 0\}$.
We now prove the following two claims:

\noindent{\bf Claim 1}.  $I_d = (P_{G(d)})_d$.

\noindent{\bf Claim 2}.  $(\bigcap_G\inn_\prec P_G)_d = (\inn_\prec P_{G(d)})_d$.

These claims, and the fact that an ideal and its initial ideal have
the same Hilbert function, imply
$$H(\inn_\prec I)(d)=H(I)(d)=H(P_{G(d)})(d)=H(\inn_\prec P_{G(d)})(d)=H(\bigcap_G\inn_\prec P_G)(d),$$
We conclude that \eqref{eq:H} holds, proving Theorem~\ref{prop:radicality}.

\paragraph{\it Proof of Claim~1.} 
Observe first that no polynomial homogeneous of multidegree~$d$
can be divisible by any $p_{ijk}$ with $(j,k)\not\in \Edges(G(d))$.  
Accordingly we have $(P_{G(d)})_d = (P^{(1)}_{G(d)})_d$, in the notation of~\eqref{eq:P0+P1},
and we will work with $P^{(1)}_{G(d)}$ hereafter.

Since $I$ and $P^{(1)}_{G(d)}$ are binomial ideals generated by differences
of monomials, it will suffice to show that the two graphs 
$\Gamma_F(I)$ and $\Gamma_F(P^{(1)}_{G(d)})$ of moves
on the fiber $F=\phi_I^{-1}(d)$ 
have the same partition into connected components.  
The refinement in one direction is clear: $\Gamma_F(I)$ is a subgraph of $\Gamma_F(P^{(1)}_{G(d)})$,
since $I_d\subseteq (P_{G(d)})_d = (P^{(1)}_{G(d)})_d$, and indeed
each generator of $I$ of multidegree at most $d$
is a monomial multiple of a generator of~$P^{(1)}_{G(d)}$. 

So given an edge of~$\Gamma_F(P^{(1)}_{G(d)})$, we must show that this edge is
contained in a connected component of $\Gamma_F(I)$.
Let $u,u'\in F$ be the endpoints of an edge of $\Gamma_F(P^{(1)}_{G(d)})$.
Then $u = u' +e_{ijk}+e_{i'j'k'}-e_{ij'k'}-e_{i'jk}$ for some $i,i'\in[r_1]$
and $(j,k),(j',k')$ edges of $G(d)$ in the same component.  By connectedness, 
there is a path of edges $e_0=(j',k'),e_1,\ldots,e_l=(j,k)$ of~$G(d)$ 
such that $e_i$ and~$e_{i+1}$ share a vertex
for each~$i$.  
Corresponding to this path there exists a sequence of moves 
$(M_m)_{m=0,\ldots,l-1}$ in~$I$, say $M_m = p^{u_m}-p^{u_{m+1}}$,
where $u_0=u'$, $u_l=u$, and where $M_m$ is a monomial multiple of  
$$p_{i,e_m}p_{i_m,e_{m+1}} - p_{i_m,e_m}p_{i,e_{m+1}}$$ 
for some $i_m\in[r_1]$.  So $u$ and~$u'$ are in a single connected
component of~$\Gamma_F(I)$.

\paragraph{\it Proof of Claim~2.} 
Again, one containment is straightforward, namely $\inn_\prec P_{G(d)}\subseteq\bigcap_G \inn_\prec P_G$.
There is an admissible graph $G$ such that $P_G\subseteq P_{G(d)}$.
Such a $G$ can be constructed per the discussion of irredundance,
if we take $p$ to be a generic point of~$V(P_{G(d)})$.  
Then $\inn_\prec P_G\subseteq\inn\prec P_{G(d)}$ and this latter initial
ideal is one of the ideals being intersected in $\bigcap_G \in_\prec P_G$.

For the other containment, 
let $C$ be any connected bipartite graph on vertex set $[r_2]\amalg[r_3]$,  
such that $d_{jk}=0$ for $(j,k)\not\in E(C)$.
By the Stanley-Reisner description of the initial ideal for $\prec_{\rm dp}$,
a monomial $p^u\in S$ of degree $d$ lies in 
$\inn_\prec P_C = \inn_\prec P_{K_{r_2r_3}}$ if and only if $p^u$
is divisible by
$p_{ij'k'}p_{i'jk}$ for some $i<i'$ and~$(j,k)<(j',k')$ lexicographically.


So if $p^u$ is a monomial of multidegree~$d$
lying in $\inn_{\prec_{\rm dp}} P_{G(d)}$, it's divisible by some
$p_{ij'k'}p_{i'jk}$ with $i<i'$ in~$[r_1]$ 
and $(j,k)<(j',k')$ two edges lying in the same connected component of~$G(d)$;
it cannot occur that instead $p^u$ is
divisible by some indeterminate $p_{ij''k''}$ for $(j'',k'')$ not an edge of $G(d)$,
since $p_u$ has multidegree~$d$.  
Now let $G$ be any admissible graph.  If $G(d)$ is not a subset of~$G$,
then $p^u$ is divisible by some indeterminate $p_{ij''k''}$ with $(j'',k'')\not\in E(G)$,
so $p^u\in \inn_\prec P_G$.  Otherwise $G(d)\subseteq G$.  In this case the edges
$(j,k)$ and~$(j',k')$ lie in the same component of~$G$, and
so $p_{ij'k'}p_{i'jk}\mid p^u$ implies $p^u\in\inn_\prec P_G$ again.
Therefore $\inn_\prec P_{G(d)}\supseteq\bigcap_G \inn_\prec P_G$.

\end{proof}

We close with the remark that we can describe explicitly which components
of~\eqref{eq:ideal primdec} contain a given point of $V(I_{\mc M})$. 
Let $p=(p_{ijk})\in\bb C^{r_1r_2r_3}$, and define 
$G(p)$ to be the bipartite graph on~$[r_2]\amalg[r_3]$ with edge set
$\{(j,k) : \mbox{$p_{ijk}\neq 0$ for some $i$}\}$. 
Then the components $V(P_G)$ containing
$(p_{ijk})$ are exactly those for which $G$ can be obtained 
from~$G(p)$ by adding edges which don't unite
two connected components of the latter containing respective 
edges $(j,k)$ and~$(j',k')$ such that $p_{\sdot jk}$ and $p_{\sdot j'k'}$
are not proportional.  If $p\in U_{G(p)}$, 
then these components are exactly those for which $G$ adds only edges which
don't unite two connected components of~$G(p)$, neither of which is an isolated vertex.

\section*{Acknowledgements}
We thank Bernd Sturmfels and 
a referee
for careful readings and for several helpful suggestions.

\end{document}